\numberwithin{equation}{section}
\theoremstyle{definition}
\newtheorem*{Def}{Definition}
\newtheorem{Thm}{Theorem}[section]
\newtheorem*{Main_Thm}{Main Theorem}
\newtheorem{Prop}[Thm]{Proposition}
\newtheorem{Lem}[Thm]{Lemma}
\newtheorem{Rmk}[Thm]{Remark}
\newcommand{\PP}{\mathbb{P}}
\newcommand{\CC}{\mathbb{C}}
\newcommand{\QQ}{\mathbb{Q}}
\newcommand{\ZZ}{\mathbb{Z}}
\newcommand{\Oo}{\mathcal{O}}
\newcommand{\Cc}{\mathcal{C}}
\newcommand{\Hom}{\operatorname{Hom}}
\newcommand{\rk}{\operatorname{rank}}
\newcommand{\bb}{\mathfrak{b}}
\renewcommand{\aa}{\mathfrak{a}}
\begin{document}

\title{
Bigness of the tangent bundles\\
of projective bundles over curves
}

\author{Jeong-Seop Kim}

\address{Department of Mathematical Sciences, KAIST, 291 Daehak-ro, Yuseong-gu, Daejeon, 34141 Korea}

\email{jeongseop@kaist.ac.kr}

\begin{abstract}
In this short article, we determine the bigness of the tangent bundle $T_X$ of the projective bundle $X=\PP_C(E)$ associated to a vector bundle $E$ on a smooth projective curve $C$.
\end{abstract}

\maketitle

\section{Introduction}

In this article, all varieties are defined over the field of complex numbers $\CC$.
After Mori's proof of the Hartshorne's conjecture on the ampleness of the tangent bundle $T_X$ \cite{Mor79}, it has been asked to characterize a smooth projective variety $X$ with certain positivity of $T_X$.
For example, a conjecture proposed by Campana and Peternell asks whether the homogeneous varieties are the only smooth Fano varieties $X$ with nef $T_X$, and the conjecture is settled for low dimensions or Picard number one \cite{CP91}, \cite{MOSWW15}.
Recently, a~series of work done by H\"{o}ring, Liu, Shao \cite{HLS20}, and H\"{o}ring, Liu \cite{HL21} investigates smooth Fano varieties $X$ with big $T_X$ as follows.
\begin{Thm}[{\cite{HLS20}, \cite{HL21}}]
Let $X$ be a smooth Fano variety.
\begin{enumerate}
\item[(1)]
If $X$ has dimension $2$, then $T_X$ is big if and only if $(K_X)^2\geq 5$.
\item[(2)]
If $X$ has dimension $3$ and Picard number $1$, then $T_X$ is big if and only if $(K_X)^3\geq 40$.
\item[(3)]
If $X$ has Picard number $1$, and if $X$ contains a rational curve with trivial normal bundle, then $T_X$ is not big unless $X$ is isomorphic to the quintic del Pezzo threefold.
\end{enumerate}
\end{Thm}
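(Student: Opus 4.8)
The unifying idea of my plan is to convert bigness of $T_X$ into a statement about the tautological line bundle on the projectivized tangent bundle. Write $Y=\PP(T_X)$ with projection $\pi\colon Y\to X$ and let $\xi$ be the class of $\Oo_{\PP(T_X)}(1)$, so that $H^0(X,S^mT_X)=H^0(Y,m\xi)$ and hence $T_X$ is big if and only if $\mathrm{vol}_Y(\xi)>0$. On $Y$ the intersection numbers $\xi^j\cdot\pi^*(\mathrm{cycle})$ and $\xi^{\dim Y}$ are expressed through the Segre classes of $T_X$, i.e.\ through the Chern classes of $X$; for instance when $X$ is a surface, $\dim Y=3$ and $\xi^3=c_1(X)^2-c_2(X)$. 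The content of the theorem is then to locate $\xi$ with respect to the pseudoeffective and nef cones of $Y$ and, when $\xi$ is big but not nef, to extract its positive part in a $\sigma$-decomposition $\xi=P+N$, for which $\mathrm{vol}_Y(\xi)=\mathrm{vol}_Y(P)$.

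For part (1) I would run through the classification of del Pezzo surfaces by $d=(K_X)^2$. For $d=9$ one has $T_{\PP^2}$ ample; for $d=7,8,9$ Riemann--Roch already gives $\chi(X,S^mT_X)\sim\tfrac{d-6}{3}\,m^3$, which is positive for large $m$ while $h^2(X,S^mT_X)=H^0(X,S^m\Omega_X\otimes\Oo_X(K_X))=0$ (as $X$ is rationally connected), so $h^0(X,S^mT_X)\geq\chi$ grows like $m^3$ and $T_X$ is big. For $d=5,6$ the divisor $\xi$ is big but not nef — indeed $\xi^3=2d-12\leq 0$ — so I would compute a $\sigma$-decomposition $\xi=P+N$ on the threefold $Y$ with $N$ supported on the preimages of the finitely many $(-1)$-curves of $X$ (along a $(-1)$-curve $E$ one has $T_X|_E\cong\Oo(2)\oplus\Oo(-1)$, producing a $\xi$-negative curve over $E$), and check that $P$ is nef with $P^3>0$; for the quintic del Pezzo surface the relevant input is the Petersen-graph configuration of its ten $(-1)$-curves. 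For $d\leq 4$ the goal is the reverse inequality $\mathrm{vol}_Y(\xi)=0$: here $X$ carries no rational curve along which $T_X$ is positive enough, and I would use a conic-bundle structure $X\to\PP^1$ together with the identity $S^mT_X\cong S^m\Omega_X\otimes\Oo_X(-mK_X)$ — or a direct analysis of the $\sigma$-decomposition of $\xi$ on $Y$ — to show $h^0(X,S^mT_X)=o(m^3)$.

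For part (2) the same reduction applies, but $Y$ is now five-dimensional and an explicit $\sigma$-decomposition is out of reach, so I would instead proceed family-by-family through the Iskovskikh--Mori--Mukai classification of Fano threefolds of Picard number one, organized by the value of $(-K_X)^3$. For each family I would estimate $h^0(X,S^mT_X)$ by restricting $S^mT_X$ to the universal family of minimal rational curves (lines or conics) on $X$: the splitting type of $T_X$ along such a curve together with the geometry of the family controls the asymptotics and forces $\mathrm{vol}_Y(\xi)=0$ once $(-K_X)^3<40$. The extremal case $(-K_X)^3=40$ is the quintic del Pezzo threefold $V_5$, whose bigness I would establish directly, using its description as a linear section of $G(2,5)$ to produce enough sections of $S^mT_X$. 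Part (3) then sharpens this conclusion: given $C\cong\PP^1\subset X$ with $N_{C/X}\cong\Oo^{\oplus(n-1)}$, $n=\dim X$, I would note $H^1(C,N_{C/X})=0$, so $C$ moves in an $(n-1)$-dimensional family whose incidence variety dominates $X$ generically finitely; moreover $T_X|_C$, being an extension of $\Oo^{\oplus(n-1)}$ by $\Oo(2)$ with vanishing $\mathrm{Ext}^1$, splits as $\Oo(2)\oplus\Oo^{\oplus(n-1)}$, so $h^0(C,S^mT_X|_C)$ grows only like $m^n$. Restricting $S^mT_X$ to this covering family and pushing down to the $(n-1)$-dimensional parameter space yields a sheaf of rank $\sim m^n$ whose positivity along the parameter directions is too limited to let $h^0(X,S^mT_X)$ reach the critical rate $m^{2n-1}$; hence $T_X$ is not big, unless the family is so special that the estimate degenerates, which a case analysis identifies with $X\cong V_5$.

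The main obstacle throughout is the sharp estimate in the non-big cases: for $d\leq 4$ in part (1), for $(-K_X)^3<40$ in part (2), and for the general $X$ in part (3), the crude intersection-theoretic or dimension-counting bounds only give $h^0(X,S^mT_X)=O(m^{e})$ with $e=2\dim X-1$ exactly the critical exponent, so one must show that the destabilizing subsheaves of $S^mT_X$ — equivalently, the negative part of $\xi$ on $Y$, or the positive part of the direct image on the parameter space of rational curves — are genuinely less positive than this bound permits. For part (2) this is delicate because $\PP(T_X)$ is five-dimensional and the Nakayama $\sigma$-decomposition is not explicitly accessible; for part (3) the subtle point is isolating $V_5$ as the unique exception, which forces the use of the precise geometry of its two types of lines rather than any soft argument.
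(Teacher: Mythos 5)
This theorem is quoted by the paper from \cite{HLS20} and \cite{HL21}; the paper itself contains no proof of it, so there is no internal argument to compare yours against. Judged on its own terms, your reduction to the bigness of the tautological class $\xi$ on $\PP_X(T_X)$, the Segre-class computation $\xi^3=c_1^2-c_2=2d-12$, and the vanishing $h^2(S^mT_X)=h^0(S^m\Omega_X\otimes K_X)=0$ on a rationally connected surface are correct and do settle the cases $d\geq 7$ of part (1). But everything that makes the theorem hard is deferred rather than proved, and your closing paragraph concedes exactly this: for $d\leq 4$, for $(-K_X)^3<40$, and for part (3), the naive counts (splitting type $\Oo(2)\oplus\Oo^{\oplus(n-1)}$ along the curve, an $(n-1)$-dimensional family, hence $m^{n}\cdot m^{n-1}$) land precisely on the critical exponent $m^{2\dim X-1}$, so the assertion that the positivity of the pushforward to the parameter space is ``too limited'' is a restatement of the conclusion, not an argument.

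The missing idea is the one this paper's introduction explicitly highlights as the engine of \cite{HLS20} and \cite{HL21}: the total dual VMRT $\breve\Cc\subset\PP_X(T_X)$ together with the computation of its divisor class, e.g.\ $[\breve\Cc]\sim\zeta+\Pi^*K_{X/Y}$ for a conic bundle $X\to Y$, and the analogous class formula for the family of rational curves with trivial normal bundle in part (3). This supplies an explicit prime divisor that is forced into the negative part of the divisorial Zariski decomposition of $\zeta$, so that the section count for $m\zeta$ can be pushed onto $\breve\Cc$ itself, which is a projective bundle over the parameter space of the curves and hence computable; it is also how $V_5$ is isolated in part (3) through the geometry of its lines, rather than by a family-by-family Riemann--Roch sweep through the Iskovskikh--Mori--Mukai list, which cannot by itself beat the critical exponent. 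Without this mechanism (or an equivalent way of exhibiting the negative part of $\xi$), your proposed $\sigma$-decomposition for $d=5,6$ and your estimates for parts (2) and (3) cannot be completed as written.
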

The second statement is extended to the following case.
\begin{Thm}[{\cite{KKL22}}]
Let $X$ be a smooth Fano variety of dimension $3$ and Picard number $2$.
Then $T_X$ is big if and only if $(K_X)^3\geq 34$.
\end{Thm}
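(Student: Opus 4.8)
The plan is to decide bigness of $T_X$ across the Mori--Mukai classification of smooth Fano threefolds of Picard number two. That classification lists $36$ deformation families together with their anticanonical degrees $d:=(-K_X)^3$ (always even, so $d=33$ never occurs), and the attained values have a gap precisely between $32$ and $34$; hence the asserted dichotomy reduces to showing that $T_X$ is big for the families with $d\geq 34$ and not big for those with $d\leq 32$. Throughout I would work with the equivalences: $T_X$ big $\iff$ the tautological class $\zeta:=\Oo_{\PP(T_X)}(1)$ on the fivefold $\PP(T_X)$ is big $\iff$ $h^0(X,S^mT_X)\sim\tfrac{\operatorname{vol}(\zeta)}{120}\,m^{5}$ with $\operatorname{vol}(\zeta)>0$. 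The Picard-number-two hypothesis enters through the fact that $\PP(T_X)$ then has Picard number three, so that $\overline{\operatorname{Eff}}(\PP(T_X))$ and the nef cone live in $\mathbb R^3$ and can be described by hand.

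For $d\geq 34$ I would prove bigness family by family from the explicit geometry: these are of a few types --- $\PP^1\times\PP^2$, projective or conic bundles and del Pezzo fibrations over $\PP^1$ or $\PP^2$, the homogeneous flag threefold $\PP(T_{\PP^2})$, and certain blow-ups of $\PP^3$, $Q^3$ or $V_5$ (for instance $\operatorname{Bl}_{\mathrm{pt}}\PP^3=\PP_{\PP^2}(\Oo\oplus\Oo(1))$, $\PP_{\PP^2}(\Oo\oplus\Oo(2))$, the blow-up of $\PP^3$ along a line --- a $\PP^2$-bundle over $\PP^1$ --- and the blow-up of $V_5$ along a line, a quintic del Pezzo fibration sitting exactly at $d=34$, where $V_5$ is the quintic del Pezzo threefold). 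For a projective bundle $X=\PP_B(\mathcal F)$ one combines the relative Euler sequence $0\to\Oo_X\to\pi^*\mathcal F^\vee\otimes\Oo_{\PP(\mathcal F)}(1)\to T_{X/B}\to 0$ with $0\to T_{X/B}\to T_X\to\pi^*T_B\to 0$, and, since $T_{\PP^1}$ and $T_{\PP^2}$ are big, a relative section count for $T_{X/B}$ twisted by powers of $\pi^*(-K_B)$ produces $\sim m^{5}$ sections of $S^mT_X$; for $\PP^1\times\PP^2$ this is immediate from the K\"unneth splitting of $S^mT_X$ and bigness of $T_{\PP^2}$. An analogous relative argument shows that a del Pezzo fibration $X\to\PP^1$ has big $T_X$ once the general fibre $S$ satisfies $(-K_S)^2\geq 5$, so that $T_S$ is big --- which is why the blow-up of $V_5$ along a line lands on the big side. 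The flag threefold is homogeneous, so $S^mT_X$ is computed by Borel--Weil. The remaining blow-ups $\sigma\colon X\to Z$ ($Z=\PP^3$ or $Q^3$) are handled by combining the inclusion $\sigma^*T_Z(-E)\hookrightarrow T_X$ with the extra sections supplied by the exceptional divisor (whose tangent bundle is again big), or else by computing $\operatorname{vol}(\zeta)$ through a divisorial Zariski decomposition on the Picard-number-three fivefold $\PP(T_X)$.

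For $d\leq 32$ I would prove non-bigness using the two extremal contractions $\pi_1,\pi_2\colon X\to Y_i$. Each is of one of a short list of types --- a conic bundle or del Pezzo/quadric fibration over a curve, a divisorial contraction to a smooth threefold along a point or a smooth curve, or a $\PP^1$- or $\PP^2$-bundle --- and in every case $X$ carries a dominating family of rational curves $\{C_t\}$ with $(-K_X)\cdot C_t$ small: conics or rulings in the fibres, lines in del Pezzo fibres, fibres of a blow-down. Restricting $T_X$ to a general $C_t\cong\PP^1$ gives a splitting $\Oo(a_1)\oplus\Oo(a_2)\oplus\Oo(a_3)$ with $a_1\geq a_2\geq a_3$ and $\sum_i a_i=(-K_X)\cdot C_t$, and the positivity obstruction of \cite{HLS20} --- which bounds the pseudoeffective threshold of $\zeta$ along the surface $\PP(T_X|_{C_t})\subset\PP(T_X)$ in terms of this generic splitting type --- forces $\zeta$ out of the interior of $\overline{\operatorname{Eff}}(\PP(T_X))$ once the degree is small enough that $a_3\leq 0$ on one of the two rulings. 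For the few borderline families just below $d=34$, a single covering family need not suffice: one combines the constraints coming from \emph{both} extremal rays, i.e.\ pins down $\overline{\operatorname{Eff}}(\PP(T_X))\subset\mathbb R^3$ explicitly and checks that $\zeta$ is a boundary class; and where the general fibre of a contraction is a del Pezzo surface of degree $\leq 4$ one may instead invoke the dimension-two case applied to the fibres.

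The step I expect to be the main obstacle is exactly this non-bigness half near the threshold: for del Pezzo/quadric fibrations of small fibre degree and blow-ups of Fano threefolds along curves of small genus, one covering family of curves is not enough, and genuinely exploiting $\rho(X)=2$ --- producing $\overline{\operatorname{Eff}}(\PP(T_X))\subset\mathbb R^3$ and verifying that $\zeta$ lies on its boundary --- seems unavoidable. Packaging the structural inputs once per contraction type so that they can be reused across families, and then matching all $36$ families against the single invariant $(-K_X)^3$, is where the bulk of the work lies; the bookkeeping finally exhibits $34$ as the threshold.
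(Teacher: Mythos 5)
This theorem is not proved in the present paper at all: it is quoted verbatim from the reference \cite{KKL22}, so there is no internal proof to compare your proposal against. Judged against what the cited work actually does, your overall architecture is the right one --- run through the Mori--Mukai list of the $36$ deformation families with $\rho(X)=2$, prove bigness directly for the large-degree families via their projective-bundle, fibration, or blow-up structure, and prove non-bigness for $d\leq 32$ by locating $\zeta$ on the boundary of $\overline{\text{Eff}}(\PP_X(T_X))$, exploiting that this cone lives in a rank-three N\'eron--Severi space. That is essentially the strategy of \cite{KKL22}.

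However, the engine you propose for the non-bigness half has a genuine gap. You assert that a dominating family of rational curves with generic splitting type $T_X|_{C_t}=\Oo(a_1)\oplus\Oo(a_2)\oplus\Oo(a_3)$, $a_3\leq 0$, ``forces $\zeta$ out of the interior of $\overline{\text{Eff}}(\PP_X(T_X))$.'' For Picard number $\geq 2$ this is false: $\PP^1\times\PP^2$ (No.~2.34, $d=54$, with big $T_X$) carries the fibres of the projection to $\PP^2$ as a dominating family with $T_X|_f\cong\Oo(2)\oplus\Oo^{\oplus 2}$, i.e.\ trivial normal bundle; the trivial-normal-bundle obstruction of \cite{HL21} is a Picard-number-one statement and does not transfer. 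What actually makes the argument work in \cite{HLS20} and \cite{KKL22} is not the splitting type but the total dual VMRT $\breve\Cc$ attached to the family, an explicit \emph{effective} divisor with class $\zeta+\Pi^*K_{X/Y}$ for a conic bundle $X\to Y$, which one feeds into a divisorial Zariski decomposition on $\PP_X(T_X)$ together with the data of the second extremal contraction; only this combined cone computation separates $d=32$ from $d=34$. Your sketch acknowledges this only as a fallback for ``borderline'' cases, but it is in fact the main mechanism throughout the $d\leq 32$ range, and without it the proof does not close. (A smaller slip: No.~2.26 at $d=34$, the blow-up of $Q^3$ along a twisted cubic, equivalently of $V_5$ along a line, has two divisorial contractions and is not a quintic del Pezzo fibration, so the fibrewise surface argument you invoke for it is unavailable.)
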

These results make use of a special divisor on the projective bundle $\PP_X(T_X)$, called the total dual VMRT $\breve\Cc$ (see \cite{HR04}, \cite{OSW16}).
In \cite{HLS20}, they find a formula for $\breve\Cc$, which can be written as follows in the case where $X$ attains a conic bundle structure $X\to Y$.
\[
[\breve\Cc]
\sim \zeta+\Pi^*K_{X/Y}
\]
where $\Pi: \PP_X(T_X)\to X$ is the projection and $\zeta$ is the tautological divisor on $\PP_X(T_X)$.
In other words, $\breve\Cc$~arises as the divisor on $\PP_X(T_X)$ corresponding to the natural subsheaf $T_{X/Y}\to T_X$ of rank $1$.

In this article, we deal with a question on the bigness of $T_X$ in the case of the projective bundle $X=\PP_C(E)$ over a smooth projective curve $C$.
When $E$ has rank $2$, $X$ becomes a ruled surface, and the classification of $X$ with big $T_X$ is a consequence of some known facts.
Indeed, if $E$ is semi-stable, then $h^0(S^k T_X)$ is bounded above by a sum of dimensions of certain family of curves on $X$, whose bound can be obtained from a remark of \cite{Ros02} (see Remark~\ref{semi-stable_rank_two}).
Otherwise, if $E$ is unstable, then the bigness of $T_X$ easily follows from the formula introduced above (cf. \cite[Remark 2.4]{KKL22}).
However, when the rank of $E$ gets larger, we cannot apply the formula because $X\to C$ is not a conic bundle.

In the case of higher ranks, when $E$ is unstable, we can find a rank $1$ subsheaf of $S^m T_X$ instead of $T_X$ to conclude that $T_X$ is big.
Also, when $E$ is semi-stable, by computing an upper bound of $h^0(S^k T_X)$, we~can determine the bigness of $T_X$ according to the stability of $E$ as follows.

\begin{Main_Thm}
Let $C$ be a smooth projective curve and $E$ be a vector bundle on $C$.
Then the projective bundle $X=\PP_C(E)$ has big tangent bundle $T_X$ if and only if $E$ is unstable or $C=\PP^1$.
\end{Main_Thm}

The proof is divided into two parts; the case where $E$ is semi-stable (Theorem \ref{semi-stable_case}) and $E$ is unstable (Theorem \ref{unstable_case}).
The exceptional case is explained in Remark \ref{trivial_case}.
It is worth noting that the result is no longer true for varieties other than curves; there exist stable bundles $E$ of rank $2$ on $\PP^2$ such that one of $E$ gives big $T_X$ whereas another choice of $E$ gives not big $T_X$ for $X=\PP_{\PP^2}(E)$ (see No.~24, 27, and 32 of Table 1 in \cite{KKL22}; No. 24 is the only case with not big $T_X$, and see also \cite{SW90}).

\medskip\noindent
\textbf{Acknowledgement. }
I would like to thank my thesis advisor Prof. Yongnam Lee for suggestion of this problem and valuable comments.
I also thank Chih-Wei Chang for pointing out an error in the previous manuscript.
\medskip

\section{Preliminaries}

Let $X$ be a smooth projective variety of dimension $n>0$ and $V$ be a vector bundle of rank $r>0$ on $X$.
In this article, $\PP_X(V)$ denotes the projective bundle with the projection $\Pi: \PP_X(V)\to X$ in the~sense of Grothendieck.
That is, for the tautological line bundle $\Oo_{\PP_X(V)}(1)$ on $\PP_X(V)$, we have
\[
\Pi_*\Oo_{\PP_X(V)}(m)=
\begin{cases}
S^mV & \text{for $m\geq 0$,}\\
0 & \text{for $m< 0$}
\end{cases}
\]
where the $0$-th power is taken to be $S^0 V=\Oo_X$ for convenience.

For $m\geq -r$ and any vector bundle $W$ on $X$,
\[
R^i\Pi_*(\Pi^*W\otimes\Oo_{\PP_X(V)}(m))
=W\otimes R^i\Pi_*\Oo_{\PP_X(V)}(m)
=0
\ \ \text{for all $i>0$}.
\]
Thus, when $m\geq -r$,
\[
H^i(\PP_X(V),\Pi^*W\otimes\Oo_{\PP_X(V)}(m))
\cong H^i(X,W\otimes\Pi_*\Oo_{\PP_X(V)}(m))
\ \ \text{for all $i\geq 0$}.
\]
In particular, $H^0(\Pi^*W\otimes\Oo_{\PP_X(V)}(-1))=0$.

\subsection{Bigness of Vector Bundle}
In this article, we define certain positivity of a vector bundle by the same positivity of the tautological line bundle on the projective bundle associated to the given vector bundle.
The definition may differ by articles, for example, there are distinct notions of bigness of vector bundles; $L$-big and $V$-big (see \cite{BKKMSU15}).

\begin{Def}
A vector bundle $V$ is said to be \emph{ample} (resp., \emph{nef}, \emph{big}, \emph{effective}, and \emph{pseudo-effective}) on~$X$ if the tautological line bundle $\Oo_{\PP_X(V)}(1)$ is ample (resp., nef, big, effective, and pseudo-effective) on~$\PP_X(V)$.
\end{Def}

\begin{Rmk}
Recall that a line bundle $L=\Oo_X(D)$ on $X$ is big if and only if it satisfies one of the followings (see \cite[Section 2.2]{Laz04}).
\begin{itemize}
\item $h^0(L^k)\sim k^n$ (which is the maximum possible).
\item $mD\equiv_{\text{num}} A+E$ for some integer $m>0$, ample divisor $A$, and effective divisor $E$ on $X$.
\item $D$ lies in the interior of the closure $\overline{\text{Eff}}(X)\subseteq N^1(X)$ of the cone of effective divisors (as bigness is well-defined under numerical equivalence).
\end{itemize}
In the case of vector bundles, we have the following (see also \cite[Section 6.1]{Laz04}).
\begin{itemize}
\item $V$ is big if and only if $h^0(S^k V)\sim k^{n+r-1}$ (which is the maximum possible).
In particular, $T_X$~is big if and only if $h^0(S^k T_X)\sim k^{2n-1}$.
\end{itemize}
\end{Rmk}

We will denote by $\zeta$ the tautological divisor on $\PP_X(V)$.
By the fact that $\zeta+m\Pi^*A$ is ample for~some integer $m>0$ and ample divisor $A$ on $X$ \cite[Proposition 1.45]{KM98}, the following lemma can be shown.

\begin{Lem}[{\cite[Lemma 2.3]{HLS20}}]\label{peff_plus_big_on_base_is_big}
Let $V$ be a vector bundle on a normal projective variety $X$.
Then $V$ is big if and only if $V\otimes \Oo_X(-D)$ is pseudo-effective for some big $\QQ$-Cartier $\QQ$-divisor $D$ on $X$.
\end{Lem}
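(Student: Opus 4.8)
The plan is to transfer everything to the total space $\PP_X(V)$ and argue with the tautological divisor $\zeta$. The key elementary fact is that for a line bundle $\Oo_X(D)$ one has a canonical isomorphism $\PP_X(V\otimes\Oo_X(-D))\cong\PP_X(V)$ under which the tautological divisor of $V\otimes\Oo_X(-D)$ corresponds to $\zeta-\Pi^*D$ (since $S^m(V\otimes\Oo_X(-D))=S^mV\otimes\Oo_X(-mD)$). Hence ``$V$ is big'' becomes ``$\zeta$ is big'' and ``$V\otimes\Oo_X(-D)$ is pseudo-effective'' becomes ``$\zeta-\Pi^*D$ is pseudo-effective'', and the assertion is reduced to: $\zeta$ is big on $\PP_X(V)$ if and only if $\zeta-\Pi^*D$ is pseudo-effective for some big $\QQ$-Cartier $\QQ$-divisor $D$ on $X$. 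Throughout I would use that bigness and pseudo-effectiveness of $\zeta$ are numerical properties, so these manipulations are legitimate in $N^1(\PP_X(V))$.

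For the ``only if'' direction I would invoke that the big cone is the interior of the pseudo-effective cone, hence open in $N^1(\PP_X(V))_{\RR}$. Fix an ample Cartier divisor $H$ on $X$; then $\Pi^*H$ is a fixed class, and since $\zeta$ lies in the open big cone, $\zeta-\varepsilon\,\Pi^*H$ stays big — in particular pseudo-effective — for all sufficiently small rational $\varepsilon>0$. Taking $D=\varepsilon H$, which is an ample (hence big) $\QQ$-Cartier $\QQ$-divisor on $X$, yields the conclusion.

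For the ``if'' direction, suppose $\zeta-\Pi^*D$ is pseudo-effective with $D$ big on $X$. Since $D$ is big, write $D\equiv_{\text{num}}A+E$ with $A$ an ample $\QQ$-divisor and $E$ an effective $\QQ$-divisor on $X$; then $\zeta-\Pi^*A\equiv_{\text{num}}(\zeta-\Pi^*D)+\Pi^*E$ is pseudo-effective. By the fact quoted above the lemma (coming from \cite[Proposition 1.45]{KM98}), there is an integer $m>0$ with $\zeta+m\,\Pi^*A$ ample on $\PP_X(V)$. Then
\[
(m+1)\,\zeta=(\zeta+m\,\Pi^*A)+m\,(\zeta-\Pi^*A)
\]
is the sum of an ample class and a pseudo-effective class, hence big (the big cone is convex and contains the ample cone in its interior), so $\zeta$ is big and $V$ is big.

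I expect the only delicate point to be the ``only if'' direction: one must be sure that the big cone of $\PP_X(V)$ is genuinely open and that pseudo-effectiveness of the tautological class is well-defined numerically, and — because $X$ is merely normal — that $\QQ$-Cartierness of $D$ suffices for all the pullbacks $\Pi^*D$ and numerical identities to make sense. Choosing $H$ to be Cartier and $\varepsilon$ rational makes this harmless. The ``if'' direction is essentially the one-line identity above together with ``ample $+$ pseudo-effective $=$ big'', and should present no real difficulty.
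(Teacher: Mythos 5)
Your proof is correct and is essentially the argument the paper intends: the paper does not write out a proof but cites \cite[Lemma 2.3]{HLS20} and flags the key ingredient — that $\zeta+m\,\Pi^*A$ is ample for some $m>0$ and ample $A$ (from \cite[Proposition 1.45]{KM98}) — which is exactly the fact your ``if'' direction turns on, while your ``only if'' direction is the standard openness-of-the-big-cone observation. No gaps; the convex-combination identity $(m+1)\zeta=(\zeta+m\,\Pi^*A)+m(\zeta-\Pi^*A)$ and ``ample plus pseudo-effective is big'' complete the argument as the paper expects.
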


Note that $V\otimes \Oo_X(-D)$ is pseudo-effective if and only if $\Oo_{\PP_X(V)}(1)\otimes\Pi^*\Oo_X(-D)$ is pseudo-effective, and it is equivalent to saying that $\Oo_{\PP_X(V)}(k)\otimes\Pi^*\Oo_X(-kD)$ is pseudo-effective for some $k>0$.
Thus~if~$S^kV\otimes\Oo_X(-D')$ is effective for some big divisor $D'$ on $X$, then we can say that $V$ is big by the lemma.
As an application of the lemma, we present a proof of the following fact.

\begin{Prop}\label{bigness_of_product}
Let $X$ and $Y$ be smooth projective varieties with big tangent bundles $T_X$ and $T_Y$.
Then the tangent bundle $T_{X\times Y}$ of $X\times Y$ is big.
\end{Prop}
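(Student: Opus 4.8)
The plan is to deduce the statement from Lemma~\ref{peff_plus_big_on_base_is_big} by means of the canonical splitting of the tangent bundle of a product. Write $p\colon X\times Y\to X$ and $q\colon X\times Y\to Y$ for the two projections; then $T_{X\times Y}=p^*T_X\oplus q^*T_Y$, and therefore, using $S^m(U\oplus V)=\bigoplus_{i+j=m}S^iU\otimes S^jV$ and the compatibility of symmetric powers with pullback,
\[
S^mT_{X\times Y}=\bigoplus_{i+j=m}p^*S^iT_X\otimes q^*S^jT_Y\qquad\text{for all }m\geq 0 .
\]
In particular $p^*S^kT_X\otimes q^*S^kT_Y$ is a direct summand of $S^{2k}T_{X\times Y}$ for every $k>0$. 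Hence, by the discussion immediately following Lemma~\ref{peff_plus_big_on_base_is_big}, it is enough to produce an integer $k>0$ and a big divisor $D'$ on $X\times Y$ for which $p^*S^kT_X\otimes q^*S^kT_Y\otimes\Oo_{X\times Y}(-D')$ is effective: then $S^{2k}T_{X\times Y}\otimes\Oo_{X\times Y}(-D')$ is effective, so $T_{X\times Y}$ is big.

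The first step is to rephrase bigness of the tangent bundle as an effectivity statement. I claim that if $Z$ is a smooth projective variety with big $T_Z$, then $S^eT_Z\otimes\Oo_Z(-A)$ is effective for some integer $e>0$ and some ample divisor $A$ on $Z$. Indeed, fix an ample divisor $H$ on $Z$ and let $\zeta$ be the tautological divisor on $\PP_Z(T_Z)$, with projection $\Pi\colon\PP_Z(T_Z)\to Z$. Since $\zeta$ is big and the big cone is open, $N\zeta-\Pi^*H$ is big for some integer $N>0$, hence $a(N\zeta-\Pi^*H)=aN\zeta-a\Pi^*H$ is effective for some integer $a>0$; by the projection formula this gives $H^0\big(Z,\,S^{aN}T_Z\otimes\Oo_Z(-aH)\big)\neq 0$, so one may take $e=aN$ and $A=aH$. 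Applying this to $X$ and to $Y$ yields integers $e,e'>0$, ample divisors $D_X$ on $X$ and $D_Y$ on $Y$, and nonzero sections of $S^eT_X\otimes\Oo_X(-D_X)$ and $S^{e'}T_Y\otimes\Oo_Y(-D_Y)$. Taking $c$-th powers of sections (on $\PP_X(T_X)$) shows that $S^{ec}T_X\otimes\Oo_X(-cD_X)$ is effective for every $c\geq 1$, with $cD_X$ still ample; so after replacing the data by suitable powers I may assume $e=e'=:k$ and fix $0\neq s\in H^0(X,S^kT_X\otimes\Oo_X(-D_X))$ and $0\neq s'\in H^0(Y,S^kT_Y\otimes\Oo_Y(-D_Y))$.

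Finally I would form the external product. Set $D':=p^*D_X+q^*D_Y$. Then $p^*s\otimes q^*s'$ is a nonzero global section of $p^*S^kT_X\otimes q^*S^kT_Y\otimes\Oo_{X\times Y}(-D')$ --- nonzero, for instance, by the Künneth identity $H^0(p^*\mathcal F\otimes q^*\mathcal G)=H^0(\mathcal F)\otimes H^0(\mathcal G)$ --- so this bundle is effective; and $D'$ is ample, hence big, because a sum $p^*D_X+q^*D_Y$ of pullbacks of ample divisors under the two projections of a product is ample (Segre embedding). By the reduction in the first paragraph, $T_{X\times Y}$ is big. I do not expect a genuine obstacle here; the only point calling for a little care is the reformulation in the second paragraph --- extracting an honest global section of a symmetric power twisted down by an ample divisor out of the bare bigness of $T_Z$ --- which rests on the openness of the big cone and on the fact that a big $\QQ$-divisor has an effective integral multiple. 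Alternatively, one can bypass Lemma~\ref{peff_plus_big_on_base_is_big} altogether: by the Künneth formula $h^0(S^mT_{X\times Y})=\sum_{i+j=m}h^0(S^iT_X)\,h^0(S^jT_Y)$, and retaining only the summands with $i$ and $j$ of order $m$ already forces $h^0(S^mT_{X\times Y})$ to grow like $m^{2\dim(X\times Y)-1}$, which is the maximum.
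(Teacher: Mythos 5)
Your proof is correct and follows essentially the same route as the paper's: decompose $T_{X\times Y}=p^*T_X\oplus q^*T_Y$, use Kodaira's lemma to produce sections of $S^kT_X\otimes\Oo_X(-D_X)$ and $S^kT_Y\otimes\Oo_Y(-D_Y)$ with $D_X$, $D_Y$ big, take their external product inside the direct summand $p^*S^kT_X\otimes q^*S^kT_Y$ of a symmetric power of $T_{X\times Y}$, and conclude with Lemma~\ref{peff_plus_big_on_base_is_big}. The paper keeps two different exponents $m,n$ and works with $S^{m+n}$, so your step equalizing the exponents is harmless but unnecessary.
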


\begin{proof}
Let $D$ and $E$ be big and effective divisors on $X$ and $Y$ respectively.
As $T_X$ and $T_Y$ are big, there exist integers $m,\,n>0$ such that $S^m T_X(-D)$ and $S^n T_Y(-E)$ are effective by Kodaira's Lemma.
Note that $T_{X\times Y}=p^*T_X\oplus q^*T_Y$ for the natural projections $p:X\times Y\to X$ and $q:X\times Y\to Y$, and $p^*D+q^*E$ is a big divisor on $X\times Y$.
Since $S^{m+n}T_{X\times Y}$ contains $S^m p^*T_X\otimes S^n q^*T_Y$ as a direct summand, we~have
\[
H^0(S^{m+n}T_{X\times Y}\otimes \Oo_{X\times Y}(-p^*D-q^*E))
\supseteq H^0(S^m p^*T_X\otimes\Oo_{X\times Y}(-p^*D)\otimes S^n q^*T_Y\otimes\Oo_{X\times Y}(-q^*E))
\neq 0.
\]
Thus $S^{m+n}T_{X\times Y}\otimes\Oo_{X\times Y}(-(p^*D+q^*E))$ is effective, and hence $T_{X\times Y}$ is big by Lemma~\ref{peff_plus_big_on_base_is_big}.
\end{proof}

\subsection{Stability of Vector Bundle}
In this article, the stability is defined in the sense of Mumford and Takemoto.
For the definitions introduced in this section, we add a mild condition (torsion-freeness) from the definitions in the reference \cite[Chapter 1]{HL10}.

Let $Y$ be a smooth projective variety and $E$ be a coherent sheaf on $Y$ with $\text{Supp}(E)=Y$.
Then there exists an open dense subset $U\subseteq Y$ such that $E|_U$ is locally free.
The \emph{rank} of $E$ is defined by $\rk E=\rk E|_U$.

\begin{Def}
Fix an ample divisor $H$ on $Y$.
For a coherent sheaf $E$ on $Y$ with $\text{Supp}(E)=Y$, the \emph{$H$-slope} of $E$ is defined by
\[
\mu_H(E)=\frac{\deg_H E}{\rk E}
\]
where the $H$-degree of $E$ is defined by $\deg_H E=c_1(E).H^{n-1}$.

Let $E$ be a torsion-free coherent sheaf of rank $r>0$ on $Y$.
Then $E$ is said to be \emph{$\mu_H$-stable} (resp., \emph{$\mu_H$-semi-stable}) if for every coherent subsheaf $F$ of $E$ with $0<\rk F< r$,
\[
\mu_H(F)<\mu_H(E)
\quad\text{(resp., $\mu(F)\leq \mu(E)$)}.
\]
Also, $E$ is said to be \emph{$\mu_H$-unstable} if it is not $\mu_H$-semi-stable.
If there is no confusion in the choice of~$H$, then we just denote it by $\mu$-stable (resp. $\mu$-semi-stable, $\mu$-unstable), or stable (resp. semi-stable, unstable) in the case where $Y$ is a curve.
\end{Def}

\begin{Rmk}
The followings are some known facts on the $\mu$-stability and slope of torsion-free coherent sheaves $E$ and $F$ on $Y$.
For the proofs, we may refer \cite[Chapter 3]{HL10}. %\cite[Theorem 3.1.4, Corollary 3.2.10]{HL10}.
\begin{itemize}
\item If $E$ and $F$ are $\mu$-semi-stable and $\mu(E)<\mu(F)$, then $\Hom(F,E)=0$.
\item If $E$ and $F$ are $\mu$-semi-stable, then $E\otimes F$ is $\mu$-semi-stable.
\item If $E$ is $\mu$-semi-stable, then $S^m E$ is $\mu$-semi-stable for all $m>0$.
\item $\rk(S^m E)=\binom{m+r-1}{r-1}$, $c_1(S^m E)=c_1(E)^{\otimes\binom{m+r-1}{r}}$, and $\mu(S^mE)=m\cdot \mu(E)$.
\item Assume that $E$ fits into the following exact sequence of vector bundles on $Y$.
\[
0\to F\to E\to Q\to 0
\]
If $\mu(F)=\mu(E)=\mu(Q)$, then $E$ is $\mu$-semi-stable if and only if both $F$ and $Q$ are $\mu$-semi-stable.
\item When $E$ is a vector bundle, $E$ is $\mu$-semi-stable if and only if its dual $E^\vee$ is $\mu$-semi-stable, and $\mu(E^\vee)=-\mu(E)$.
\end{itemize}
\end{Rmk}

For a torsion-free coherent sheaf $E$ on $Y$, there exists a canonical filtration
\[
0=E_0\subset 
E_1\subset
\cdots\subset
E_k=E,
\]
which satisfies
\begin{itemize}
\item $E_i/E_{i-1}$ is $\mu$-semi-stable (also, torsion-free) for all $0<i\leq k$, and
\item $\mu(E_{i+1}/E_i)<\mu(E_i/E_{i-1})$ for all $0<i<k$.
\end{itemize}
This filtration is called the \emph{Harder-Narasimhan filtration} of $E$.
We call $F=E_1$ the \emph{maximal destabilizing subsheaf} of $E$.
When $E$ is $\mu$-unstable, we must have $\mu(F)>\mu(E)$.
Also, it follows from the definition that $E/F$ is torsion-free.
Note that, in the case of curves $Y=C$, a coherent sheaf is torsion-free if and only if it is locally free, so we can further say that $E/F$ is locally free.

\section{Semi-Stable Case}

In this section, let $Y$ be a smooth projective variety of dimension $n>0$, and fix an ample divisor $H$ on $Y$.
Let $E$ be a vector bundle of rank $r>0$ on $Y$.
We denote by $X=\PP_Y(E)$ the projective bundle associated to $E$ with the projection $\pi: \PP_Y(E)\to Y$, and by $\Oo_X(\xi)$ the tautological line bundle of $X$.
Then, after taking symmetric powers to the relative Euler sequence
\[
0
\to \Oo_X
\to \pi^*E^\vee\otimes\Oo_X(\xi)
\to T_{X/Y}
\to 0,
\]
we obtain the following exact sequence on $X$.
\begin{equation}\label{symmetric_product_of_relative_Euler_sequence}
0
\to S^{m-1}\pi^*E^\vee\otimes\Oo_X((m-1)\xi)
\to S^m\pi^*E^\vee\otimes\Oo_X(m\xi)
\to S^mT_{X/Y}
\to 0
\end{equation}
By pushing forward the exact sequence via $\pi$, we have the following exact sequence on $Y$.
\[
0
\to S^{m-1}E^\vee\otimes S^{m-1}E
\to S^mE^\vee\otimes S^mE
\to \pi_*S^mT_{X/Y}
\to 0
\]

\begin{Lem}\label{semi-stability_of_push-forward_of_relative_tangent_bundle}
If $E$ is $\mu$-semi-stable, then $\pi_*S^m T_{X/Y}$ is a $\mu$-semi-stable bundle of $\deg_H\pi_*S^mT_{X/Y}=0$ on $Y$ where $X=\PP_Y(E)$ and $\pi: \PP_Y(E)\to Y$ is the projection.
\end{Lem}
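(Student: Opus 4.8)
The plan is to read off the lemma's three claims --- that $\pi_*S^mT_{X/Y}$ is a vector bundle, that it has $H$-degree $0$, and that it is $\mu_H$-semi-stable --- from the push-forward of the exact sequence \eqref{symmetric_product_of_relative_Euler_sequence}, namely
\[
0\to S^{m-1}E^\vee\otimes S^{m-1}E\to S^mE^\vee\otimes S^mE\to \pi_*S^mT_{X/Y}\to 0 ,
\]
which is exact since $R^1\pi_*\bigl(S^{m-1}\pi^*E^\vee\otimes\Oo_X((m-1)\xi)\bigr)=S^{m-1}E^\vee\otimes R^1\pi_*\Oo_X((m-1)\xi)=0$ because $m-1\geq 0\geq -r$. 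To see that $\pi_*S^mT_{X/Y}$ is locally free, I would restrict \eqref{symmetric_product_of_relative_Euler_sequence} to a fibre $X_y\cong\PP^{r-1}$: there the bundles $S^k\pi^*E^\vee\otimes\Oo_X(k\xi)$ with $k=m-1,m$ become direct sums of copies of $\Oo_{\PP^{r-1}}(k)$ (as $\pi^*E^\vee|_{X_y}$ is trivial), hence have vanishing higher cohomology, and so $(S^mT_{X/Y})|_{X_y}\cong S^mT_{\PP^{r-1}}$ has vanishing higher cohomology and constant $h^0$ as $y$ varies. Since $S^mT_{X/Y}$ is flat over $Y$, cohomology and base change then gives that $\pi_*S^mT_{X/Y}$ is locally free (and that $R^{>0}\pi_*S^mT_{X/Y}=0$).

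For the degree, I would use that $\deg_H$ is additive in short exact sequences, so by the displayed sequence it suffices to check $\deg_H(S^kE^\vee\otimes S^kE)=0$ for all $k\geq 0$. Using $\rk S^kE=\rk S^kE^\vee=\binom{k+r-1}{r-1}$ together with $c_1(S^kE^\vee)=-c_1(S^kE)$,
\[
\deg_H(S^kE^\vee\otimes S^kE)=\binom{k+r-1}{r-1}\Bigl(\deg_H S^kE^\vee+\deg_H S^kE\Bigr)=0 ,
\]
so $\deg_H\pi_*S^mT_{X/Y}=0$ and in particular $\mu_H(\pi_*S^mT_{X/Y})=0$.

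For semi-stability, the key point is that $\pi_*S^mT_{X/Y}$ is realized as a quotient of a semi-stable bundle of the same slope. Since $E$ is $\mu_H$-semi-stable, so is $E^\vee$, hence so are $S^mE$ and $S^mE^\vee$, and hence so is $S^mE^\vee\otimes S^mE$, whose slope is $\mu_H(S^mE^\vee)+\mu_H(S^mE)=-m\mu_H(E)+m\mu_H(E)=0$. Recall that any coherent quotient $Q$ of a $\mu_H$-semi-stable sheaf of slope $\mu_0$ satisfies $\mu_H(Q)\geq\mu_0$ (the kernel is a subsheaf of slope $\leq\mu_0$, and one rearranges the additivity of rank and degree). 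Given any coherent subsheaf $F\subset\pi_*S^mT_{X/Y}$ with $0<\rk F<\rk\pi_*S^mT_{X/Y}$, composing the surjection $S^mE^\vee\otimes S^mE\twoheadrightarrow\pi_*S^mT_{X/Y}$ with the quotient map exhibits $Q=(\pi_*S^mT_{X/Y})/F$ as a quotient of $S^mE^\vee\otimes S^mE$, so $\mu_H(Q)\geq 0$; since $\deg_H\pi_*S^mT_{X/Y}=0$, this is equivalent to $\mu_H(F)\leq 0=\mu_H(\pi_*S^mT_{X/Y})$. Hence $\pi_*S^mT_{X/Y}$ is $\mu_H$-semi-stable.

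Once the push-forward sequence is available, no step is a genuine obstacle: the argument is bookkeeping with ranks and degrees together with the standard stability facts recalled in the preliminaries (duals, tensor products, and symmetric powers of semi-stable sheaves are semi-stable). The step requiring the most care is the local freeness of $\pi_*S^mT_{X/Y}$, which rests on the vanishing $H^{>0}(\PP^{r-1},S^mT_{\PP^{r-1}})=0$; and the conceptual heart is simply that $\pi_*S^mT_{X/Y}$ occurs as a quotient of the semi-stable bundle $S^mE^\vee\otimes S^mE$ of the same (zero) slope, which forces semi-stability.
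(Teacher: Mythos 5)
Your proof is correct and follows essentially the same route as the paper: push forward the symmetric power of the relative Euler sequence, compute that $\deg_H(S^kE^\vee\otimes S^kE)=0$, and conclude semi-stability from the fact that $\pi_*S^mT_{X/Y}$ is a quotient of the $\mu_H$-semi-stable bundle $S^mE^\vee\otimes S^mE$ of the same (zero) slope. The only difference is that you spell out the exactness of the pushed-forward sequence, the local freeness via cohomology and base change, and the quotient-slope inequality, all of which the paper leaves implicit.
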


\begin{proof}
Note that $S^m E^\vee\otimes S^m E$ is $\mu$-semi-stable for all $m>0$ because $E$ is $\mu$-semi-stable.
Moreover, we have $\deg_H\pi_*S^mT_{X/Y}=0$ due to the above sequence and
\[
\deg_H (S^mE^\vee\otimes S^m E)
=\rk (S^m E)\cdot\deg_H (S^mE^\vee)+\rk (S^m E^\vee)\cdot\deg_H (S^m E)
=0.
\]
Since $\pi_*S^mT_{X/C}$ is a quotient of a $\mu$-semi-stable bundle of the same $H$-slope, it is $\mu$-semi-stable.
\end{proof}

\begin{Prop}\label{semi-stable_general_case}
Assume that $T_Y$ is $\mu$-semi-stable and $\deg_H T_Y<0$.
If $E$ is $\mu$-semi-stable, then the tangent bundle $T_X$ of $X=\PP_Y(E)$ is not big.
\end{Prop}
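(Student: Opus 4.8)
The plan is to bound $h^0(X, S^k T_X)$ from above and show that it grows strictly more slowly than $k^{2\dim X - 1}$, so that $T_X$ fails the numerical criterion for bigness. Recall $\dim X = n + r - 1$. First I would take symmetric powers of the relative tangent sequence $0 \to T_{X/Y} \to T_X \to \pi^* T_Y \to 0$. This equips $S^k T_X$ with a filtration whose graded pieces are the sheaves $S^a T_{X/Y} \otimes \pi^* S^b T_Y$ with $a + b = k$ — among them $S^k T_{X/Y}$ and $\pi^* S^k T_Y$ at the two ends. Left-exactness of global sections then yields
\[
h^0(X, S^k T_X) \;\le\; \sum_{a+b=k} h^0\bigl(X,\, S^a T_{X/Y} \otimes \pi^* S^b T_Y\bigr).
\]

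Next I would push each summand down to $Y$. Since $S^b T_Y$ is locally free, the projection formula together with $H^0(X,-) = H^0(Y, \pi_*-)$ identifies the $(a,b)$-summand with $h^0\bigl(Y,\, (\pi_* S^a T_{X/Y}) \otimes S^b T_Y\bigr)$. By Lemma~\ref{semi-stability_of_push-forward_of_relative_tangent_bundle} the bundle $\pi_* S^a T_{X/Y}$ is $\mu_H$-semi-stable with $\deg_H = 0$ — the only place semi-stability of $E$ is used — while $T_Y$ being $\mu_H$-semi-stable makes $S^b T_Y$ $\mu_H$-semi-stable of slope $b\,\mu_H(T_Y) < 0$ for every $b \ge 1$, because $\deg_H T_Y < 0$. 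A tensor product of $\mu_H$-semi-stable sheaves is $\mu_H$-semi-stable, so for $b \ge 1$ the summand is a $\mu_H$-semi-stable sheaf of negative slope and hence has no nonzero global section (there is no nonzero map from $\Oo_Y$, of slope $0$, into a semi-stable sheaf of negative slope). Therefore only the $b = 0$ term survives, and
\[
h^0(X, S^k T_X) \;\le\; h^0(X, S^k T_{X/Y}) \;=\; h^0(Y, \pi_* S^k T_{X/Y}).
\]

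It then remains to bound $h^0(Y, \pi_* S^k T_{X/Y})$. Pushing forward the symmetric power \eqref{symmetric_product_of_relative_Euler_sequence} of the relative Euler sequence shows that $\pi_* S^k T_{X/Y}$ has rank $\binom{k+r-1}{r-1}^2 - \binom{k+r-2}{r-1}^2$, a polynomial in $k$ of degree $2r - 3$, and by Lemma~\ref{semi-stability_of_push-forward_of_relative_tangent_bundle} it is $\mu_H$-semi-stable with $\deg_H = 0$. Here I would invoke the fact that a $\mu_H$-semi-stable sheaf $\mathcal{G}$ with $\deg_H \mathcal{G} = 0$ on $Y$ satisfies $h^0(Y,\mathcal{G}) \le \rk \mathcal{G}$: when $Y$ is a curve this is an easy induction on the rank, peeling off copies of $\Oo_Y$ and using that semi-stability forces the saturation of any section to be exactly $\Oo_Y$; when $\dim Y \ge 2$ one restricts to a general complete intersection curve of sufficiently large degree, on which $\mathcal{G}$ stays semi-stable of degree $0$ by the Mehta--Ramanathan restriction theorem and into whose space of sections $H^0(Y,\mathcal{G})$ injects, because each intermediate negative twist $\mathcal{G} \otimes \Oo_Y(-mH)$ is semi-stable of negative slope and so has no sections. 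This gives $h^0(X, S^k T_X) = O(k^{2r-3})$.

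Finally, $T_X$ is big exactly when $h^0(X, S^k T_X)$ attains the maximal growth $k^{2\dim X - 1} = k^{2n + 2r - 3}$; since $n \ge 1$ we have $2r - 3 < 2n + 2r - 3$, so the estimate above forces $T_X$ not to be big. I expect the one genuinely non-formal ingredient to be the inequality $h^0 \le \rk$ for $\mu_H$-semi-stable degree-$0$ sheaves — elementary for curves, but requiring the Mehta--Ramanathan theorem (or an equivalent restriction result) when $\dim Y \ge 2$; everything else is bookkeeping with the relative tangent sequence and the slope inequalities recalled earlier.
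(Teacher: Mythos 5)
Your proposal is correct and follows essentially the same route as the paper: the same filtration of $S^kT_X$ coming from $0\to T_{X/Y}\to T_X\to\pi^*T_Y\to 0$, the same use of Lemma~\ref{semi-stability_of_push-forward_of_relative_tangent_bundle} to kill every graded piece with a positive power of $\pi^*T_Y$ (the paper phrases the vanishing as $\Hom((S^{k-m}T_Y)^\vee,\pi_*S^mT_{X/Y})=0$, which is the same slope argument), and the same conclusion that only $h^0(S^kT_{X/Y})$ survives and grows too slowly. Your treatment of the last step is in fact more explicit than the paper's, which simply asserts $h^0(S^kT_{X/Y})=O(k^{n+r-2})$; your bound $h^0\leq\rk$ for $\mu_H$-semi-stable sheaves of $H$-degree $0$ (via Mehta--Ramanathan restriction when $\dim Y\geq 2$) gives $O(k^{2r-3})$, which suffices just as well.
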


\begin{proof}
Since the projection $\pi: X\to Y$ is a smooth morphism, there is the following exact sequence of vector bundles on $X$.
\[
0
\to T_{X/Y}
\to T_X
\to \pi^*T_Y
\to 0
\]
From the above sequence, we can bound the dimension of the global sections of $S^k T_X$ as follows.
\[
h^0(S^k T_X)\leq\sum_{m=0}^k h^0(S^mT_{X/Y}\otimes S^{k-m}{\pi^*T_Y}).
\]

By the assumption, $\left(S^{k-m}{T_Y}\right)^\vee$ is $\mu$-semi-stable, and $\deg_H \left(S^{k-m}{T_Y}\right)^\vee>0$ so that
\[
h^0(S^m T_{X/Y}\otimes S^{k-m}{\pi^*T_Y})
=h^0(\pi_* S^m T_{X/Y}\otimes S^{k-m}{T_Y})
=\dim \Hom(\left(S^{k-m}{T_Y}\right)^\vee,\pi_* S^m T_{X/Y})
=0
\]
whenever $0\leq m<k$ due to Lemma \ref{semi-stability_of_push-forward_of_relative_tangent_bundle}.
Thus
\[
h^0(S^k T_X)
\leq h^0(S^k T_{X/Y})
=O(k^{n+r-2}).
\]
That is, $T_X$ is not big.
\end{proof}
\pagebreak

\begin{Thm}\label{semi-stable_case}
Let $C$ be a smooth projective curve of genus $g>0$ and $E$ be a vector bundle on $C$.
If~$E$ is semi-stable, then the tangent bundle $T_X$ of $X=\PP_C(E)$ is not big.
\end{Thm}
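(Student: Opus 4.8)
The plan is to estimate $h^0(S^k T_X)$ directly and show it grows strictly more slowly than the rate forced by bigness. Write $\pi\colon X=\PP_C(E)\to C$ and $r=\rk E$, so $\dim X=r$ and $T_X$ is big precisely when $h^0(S^k T_X)\sim k^{2r-1}$ (the maximal possible rate); hence it suffices to prove $h^0(S^k T_X)=O(k^{r-1})$. The case $r=1$ is immediate, since then $X=C$ and $T_X=T_C$ is a line bundle of degree $2-2g\le 0$, so $h^0(T_C^{\otimes k})=O(1)$; assume from now on $r\ge 2$.

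First I would invoke the relative tangent sequence $0\to T_{X/C}\to T_X\to \pi^*T_C\to 0$. Taking symmetric powers yields a filtration of $S^k T_X$ with graded pieces $S^m T_{X/C}\otimes \pi^*T_C^{\otimes(k-m)}$, $0\le m\le k$; combining the resulting inequality with $H^0(X,-)=H^0(C,\pi_*-)$ and the projection formula gives
\[
h^0(S^k T_X)\ \le\ \sum_{m=0}^{k} h^0\bigl(C,\,(\pi_* S^m T_{X/C})\otimes T_C^{\otimes(k-m)}\bigr).
\]
By Lemma \ref{semi-stability_of_push-forward_of_relative_tangent_bundle}, each $\pi_* S^m T_{X/C}$ is a $\mu$-semi-stable bundle of degree $0$ on $C$, and tensoring it with the line bundle $T_C^{\otimes(k-m)}$, of degree $(k-m)(2-2g)\le 0$, keeps it semi-stable of slope $\le 0$.

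Next I would use the elementary fact that a semi-stable bundle $W$ of slope $\le 0$ on a smooth projective curve satisfies $h^0(W)\le \rk W$: a nonzero section produces a saturated line subbundle of non-negative degree, which forces the slope to be $0$ and the subbundle to be $\Oo_C$, and the quotient is again semi-stable of slope $0$, so one finishes by induction on the rank (if the slope is negative there is simply no nonzero section). Feeding this into the sum gives
\[
h^0(S^k T_X)\ \le\ \sum_{m=0}^{k}\rk\bigl(S^m T_{X/C}\bigr)\ =\ \sum_{m=0}^{k}\binom{m+r-2}{r-2}\ =\ \binom{k+r-1}{r-1}\ =\ O(k^{r-1}),
\]
which is well below $k^{2r-1}$, so $T_X$ is not big. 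When $g\ge 2$ this is quicker, as the summands with $m<k$ then have negative slope and vanish, leaving only $m=k$ of size $O(k^{r-2})$; equivalently, for $g\ge 2$ the statement is an immediate consequence of Proposition \ref{semi-stable_general_case} with $Y=C$, since $\deg T_C<0$ and $T_C$, being a line bundle, is automatically $\mu$-semi-stable.

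I do not anticipate a genuine obstacle: the structural content is already packaged in Lemma \ref{semi-stability_of_push-forward_of_relative_tangent_bundle}, and the only extra ingredient is the standard bound $h^0(W)\le\rk W$ for slope-$\le 0$ semi-stable bundles on a curve. The single point deserving care is the genus-$1$ case, where $T_C\cong\Oo_C$ has degree $0$ rather than negative, so Proposition \ref{semi-stable_general_case} does not apply verbatim and one must sum the estimates over $m$ instead of relying on pointwise vanishing.
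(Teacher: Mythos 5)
Your overall strategy coincides with the paper's: filter $S^kT_X$ via the relative tangent sequence, push forward to $C$, and use Lemma \ref{semi-stability_of_push-forward_of_relative_tangent_bundle} to reduce everything to the fact that a semi-stable bundle of non-positive slope on a curve has $h^0\le\rk$. Your elementary inductive proof of that fact is correct and lets you treat all $g>0$ uniformly, whereas the paper quotes Atiyah's Lemma 15 for $g=1$ and falls back on Proposition \ref{semi-stable_general_case} for $g\ge 2$; this is a mild but genuine simplification.

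There is, however, a computational slip in your final estimate. The bundle to which you apply $h^0(W)\le\rk W$ is $W=(\pi_*S^mT_{X/C})\otimes T_C^{\otimes(k-m)}$ on $C$, whose rank is $\rk(\pi_*S^mT_{X/C})$, not $\rk(S^mT_{X/C})=\binom{m+r-2}{r-2}$. The pushforward has fiber $H^0(\PP^{r-1},S^mT_{\PP^{r-1}})$, and from the pushed-forward Euler sequence its rank is $\binom{m+r-1}{r-1}^2-\binom{m+r-2}{r-1}^2=O(m^{2r-3})$; already for $r=2$ this is $2m+1$ rather than $1$. The corrected sum telescopes to $\rk(S^kE^\vee\otimes S^kE)=\binom{k+r-1}{r-1}^2=O(k^{2r-2})$, which is exactly the paper's bound and is still strictly below the $k^{2r-1}$ required for bigness of $T_X$ on the $r$-dimensional variety $X$. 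So your conclusion survives, but the claimed exponent $r-1$ (and the corresponding $O(k^{r-2})$ in your $g\ge2$ aside) is wrong and should be replaced by $2r-2$.
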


\begin{proof}
If $g\geq 2$, then $\pi^*T_C$ is stable as it is a line bundle, thus $T_X$ is not big by Proposition \ref{semi-stable_general_case}.
Otherwise, if $g=1$, then $T_C=\Oo_C$.
According to \cite[Lemma 15]{Ati57}, $h^0(S^mT_{X/C})=h^0(\pi_*S^mT_{X/C})$ is bounded above by the number of indecomposable direct summands of $\pi_*S^mT_{X/C}$ as $\pi_*S^mT_{X/C}$ is a semi-stable bundle of degree $0$ on $C$.
Thus we have
\[
h^0(S^m T_{X/C})
\leq \rk(\pi_*S^mT_{X/C})
=\rk(S^mE^\vee\otimes S^mE)-\rk(S^{m-1}E^\vee\otimes S^{m-1}E).
\]
After telescoping, we can conclude that
\[
h^0(S^k T_X)
\leq\sum_{m=0}^k h^0(S^mT_{X/C})
\leq\rk(S^k E^\vee\otimes S^k E)
=O(k^{2r-2}).
\]
That is, $T_X$ is not big as $X$ has dimension $r$.
\end{proof}

\begin{Rmk}\label{semi-stable_rank_two}
Let $E$ be a semi-stable bundle of rank $2$ on a smooth projective curve $C$ of genus $g>0$.
Then $T_{X/C}$ is a line bundle on $X$, and
\[
S^mT_{X/C}
={T_{X/C}}^{\otimes m}
\cong\Oo_X(2mC_0)
\]
for some $\QQ$-divisor $C_0$ on $X$ with ${C_0}^2=0$.
For a divisor $\bb$ on $C$, we denote by $\Oo_X(\bb f)=\pi^*\Oo_C(\bb)$.

If~$\deg \bb<0$, then $h^0(\Oo_X(2mC_0+\bb f))=0$ because there is no effective divisor $D$ on $X$ with $D^2<0$ (cf. \cite[Section 1.5.A]{Laz04}).
If $\deg \bb=0$, then it is known from \cite[Remark in p.~122]{Ros02} that $h^0(\Oo_X(2mC_0+\bb f))\leq 1$ whenever there is an integral effective divisor $D\sim 2mC_0+\bb f$ for some $m>0$.\linebreak
Due to the remark, if such $D$ is not integral, then it must given by a multiple of divisors numerically equivalent to $C_0$.
In this case, we can find an upper bound of the dimension of the family of such $D$ as $E$ splits once we have $h^0(\Oo_X(C_0+\aa f))\geq 2$ for any divisor $\aa$ of degree $0$ on $C$ \cite[Lemma 5.4]{NR69}.
\end{Rmk}

\begin{Rmk}\label{trivial_case}
If $g=0$ and $E$ is semi-stable, then $C=\PP^1$ and $E=\Oo_{\PP^1}(a)^{\oplus r}$ for some $a\in\ZZ$.
Thus $X\cong \PP^{r-1}\times\PP^1$ and $T_X$ is big by Lemma \ref{bigness_of_product}.
\end{Rmk}

\begin{Rmk}
Using the result on curves, we can state the non-bigness of $T_X$ without the $\mu$-semi-stability of $T_Y$ under some special assumptions on $Y$ and $E$.
Assume that $Y$ has a fibration $p:Y\to B$ with a smooth base $B$ and general fiber $f$ being a smooth curve of genus $g>0$.
If $E|_f$ is semi-stable on a general fiber $f$, then the tangent bundle $T_X$ of $X=\PP_Y(E)$ is big.

Suppose that $T_X$ is big.
Let $Z=\PP_f(E|_f)$ and $\pi_f:Z\to f$ be the induced projection.
Then, for general $Z=\PP_f(E|_f)$, $T_X|_Z$ is big, so $T_Z$ is necessarily big.
Indeed, from the exact sequence
\[
0
\to T_Z
\to T_X|_Z
\to N_{Z|X}
\to 0,
\]
we have $N_{Z|X}\cong {\pi_f}^*N_{f|Y}\cong {\pi_f}^*\Oo_f^{\oplus n-1}\cong \Oo_Z^{\oplus n-1}$, and it gives the following bound.
\[
h^0(S^k T_X|_Z)
\leq \sum_{m=0}^k h^0(S^m T_Z\otimes S^{k-m}(\Oo_f^{\oplus n-1}))
= \sum_{m=0}^k O(k^{n-2})\cdot h^0(S^m T_Z)
= O(k^{n-1})\cdot h^0(S^k T_Z)
\]
However, as $E|_f$ is assumed to be semi-stable, $Z=\PP_f(E|_f)$ cannot have big $T_Z$ by Theorem \ref{semi-stable_case}, which is a contradiction.
\end{Rmk}

\section{Unstable Case}

In this section, we concentrate on the case where $Y$ is a smooth projective curve $C$ of genus $g\geq 0$.
We continue to use the notation in the previous section, e.\,g., $E$ denotes a vector bundle on $C$.

\begin{Prop}\label{line_bundle_unstabilization}
If $E$ is unstable, then $S^m E$ is \emph{unstabilized} by a line subbundle for some $m>0$;\linebreak
there exists a line subbundle $L$ of $S^m E$ with $\mu(L)>\mu(S^m E)$.
\end{Prop}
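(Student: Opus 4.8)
The plan is to pass from $E$ to its maximal destabilizing subbundle, raise it to a sufficiently high symmetric power, and then produce a line subbundle there by a Riemann--Roch estimate. Concretely, since $E$ is unstable, the Harder--Narasimhan filtration provides the maximal destabilizing subsheaf $F\subseteq E$ with $\mu(F)>\mu(E)$, and since $C$ is a curve, $E/F$ is locally free, so $F$ is a subbundle of $E$. As symmetric powers are functorial and preserve local direct summands, $S^mF$ is a subbundle of $S^mE$ for each $m>0$, and the slope formulas from the Preliminaries give $\mu(S^mF)=m\,\mu(F)$ and $\mu(S^mE)=m\,\mu(E)$; thus the gap $\mu(S^mF)-\mu(S^mE)=m\bigl(\mu(F)-\mu(E)\bigr)$ grows linearly in $m$.

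The second ingredient is the elementary fact that any vector bundle $V$ of rank $\rho>0$ on a curve $C$ of genus $g$ contains a line subbundle of degree at least $\mu(V)-g$. To prove it, I would pick a line bundle $A$ on $C$ whose degree is the least integer exceeding $g-1-\mu(V)$; Riemann--Roch then gives $\chi(V\otimes A)=\deg V+\rho\deg A+\rho(1-g)=\rho\bigl(\mu(V)+\deg A+1-g\bigr)>0$, so $H^0(V\otimes A)\neq 0$. A nonzero section is a nonzero, hence injective, morphism $A^\vee\to V$, and the saturation of its image is a line subbundle of $V$ of degree at least $\deg A^\vee=-\deg A\geq\mu(V)-g$.

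Applying this to $V=S^mF$ yields a line subbundle $L\subseteq S^mF\subseteq S^mE$ with $\deg L\geq m\,\mu(F)-g$. Since $\mu(F)-\mu(E)$ is a fixed positive rational number, for all sufficiently large $m$ we have $m\bigl(\mu(F)-\mu(E)\bigr)>g$, whence $\deg L\geq m\,\mu(F)-g>m\,\mu(E)=\mu(S^mE)$; replacing $L$ by its saturation in $S^mE$ (which can only raise the degree) produces a line subbundle of $S^mE$ of slope strictly larger than $\mu(S^mE)$, as claimed. When $g=0$ one may instead take $m=1$ at once, since then $E=\bigoplus_i\Oo_{\PP^1}(a_i)$ and $\Oo_{\PP^1}(\max_i a_i)$ already destabilizes $E$.

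The step I expect to be most delicate is reconciling the two growth rates: $\rk S^mF=\binom{m+\rk F-1}{\rk F-1}$ increases with $m$, so any estimate on the degree of the best sub-line-bundle that deteriorated with the rank would be worthless here. The Riemann--Roch argument is exactly what keeps the defect bounded by $g$ irrespective of the rank, so that the linear slope gap from the first paragraph eventually wins.
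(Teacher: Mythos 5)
Your proof is correct, and its skeleton is the same as the paper's: pass to the maximal destabilizing subbundle $F$, note that $S^mF$ sits inside $S^mE$ with slope gap $m(\mu(F)-\mu(E))$ growing linearly, and then find a line subbundle of $S^mF$ whose slope falls short of $\mu(S^mF)$ by at most a constant depending only on $g$. The one genuine difference is the source of that last ingredient. The paper quotes the theorem of Mukai and Sakai \cite{MS85}, which gives a line subbundle $L\subseteq V$ with $\mu(V)-\mu(L)\leq\frac{\rk V-1}{\rk V}\,g<g$; you instead prove the weaker bound $\deg L\geq\mu(V)-g$ by hand, choosing an auxiliary line bundle $A$ with $\chi(V\otimes A)>0$ via Riemann--Roch, extracting a nonzero map $A^\vee\to V$, and saturating. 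Your bound is slightly worse but equally uniform in the rank, which is all that matters since the slope gap is unbounded in $m$; the payoff is that your argument is self-contained and elementary, at the cost of a marginally larger threshold for $m$ (you need $m(\mu(F)-\mu(E))>g$ rather than $>\frac{\rk-1}{\rk}g$, an immaterial difference). Two small points you handled correctly but should keep explicit: the saturation step is genuinely needed, since the image of $A^\vee\to V$ need not be a subbundle; and a line subbundle of $S^mF$ is automatically a subbundle of $S^mE$ because $S^mE/S^mF$ is locally free, so either saturating in $S^mF$ or in $S^mE$ works.
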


\begin{proof}
Let $F$ be the maximal destabilizing subbundle of $E$.
Then $\mu(F)-\mu(E)>0$ as $E$ is unstable.
Also, the quotient $Q$ is locally free, so we obtain the following exact sequence of vector bundles on $C$.
\[
0
\to F
\to E
\to Q
\to 0
\]
By taking the symmetric powers to the exact sequence,
\[
0
\to S^m F
\to S^m E
\to S^{m-1}E\otimes Q 
\to S^{m-2}E\otimes \wedge^2 Q
\to \cdots
\to S^{m-\rk Q}E\otimes \wedge^{\rk Q} Q
\to 0,
\]
we can observe that $S^m F$ is a subbundle of $S^m E$.
Note that $\mu(S^m F)-\mu(S^m E)=m\cdot (\mu(F)-\mu(E))>0$.

Due to \cite{MS85}, for each $m>0$, there exists a line subbundle $L$ of $S^m F$ satisfying
\[
\mu(S^m F)-\mu(L)
\leq \frac{\rk(S^m F)-\rk(L)}{\rk(S^m F)\cdot\rk(L)}\cdot g
< g.
\]
So we can find a line subbundle $L$ of $S^m F$ such that
\[
\mu(L)-\mu(S^m E)
=\left\{\mu(S^m F)-\mu(S^m E)\right\}-\left\{\mu(S^m F)-\mu(L)\right\}
>m\cdot (\mu(F)-\mu(E))-g>0
\]
by taking $m>0$ large enough.
As $S^m F$ is a subbundle of $S^m E$, $L$ is also a nonzero subbundle of~$S^m E$.
Hence we obtain a line subbundle $L$ of $S^m E$ satisfying $\mu(L)>\mu(S^m E)$ for some $m>0$.
\end{proof}

A direct application of the proposition is the following.

\begin{Lem}\label{bigness_by_degree}
If $\deg E>0$, then $E$ is big on $C$.
Equivalently, if $\deg E>0$, then the tautological line bundle $\Oo_X(\xi)$ is big on $X=\PP_C(E)$.
\end{Lem}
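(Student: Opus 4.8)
The plan is to deduce bigness of $E$ from Proposition~\ref{line_bundle_unstabilization} together with Lemma~\ref{peff_plus_big_on_base_is_big} applied over the curve $C$. First I would observe that since $\deg E>0$ and $\operatorname{rank} E=r$, we have $\mu(E)=\tfrac{\deg E}{r}>0$. We want to show $\Oo_X(\xi)$ lies in the interior of the pseudo-effective cone of $X=\PP_C(E)$; by Lemma~\ref{peff_plus_big_on_base_is_big} it suffices to exhibit a big $\QQ$-divisor $D$ on $C$ (equivalently, any divisor of positive degree, since every positive-degree divisor on a curve is ample hence big) such that $E\otimes\Oo_C(-D)$ is pseudo-effective, i.e.\ such that $S^k E\otimes\Oo_C(-kD)$ is effective for some $k>0$.

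The key step is to produce such global sections from the line subbundle supplied by Proposition~\ref{line_bundle_unstabilization}. Applying that proposition to the unstable-or-not dichotomy is slightly delicate: if $E$ itself is unstable we get a line subbundle $L\subseteq S^m E$ with $\mu(L)>\mu(S^m E)=m\mu(E)>0$; if $E$ is semistable the proposition does not directly apply, but then $E$ still has slope $\mu(E)>0$, and one can instead use the Mumford--Szpiro type bound from \cite{MS85} (exactly as in the proof of Proposition~\ref{line_bundle_unstabilization}) to find, for $m\gg 0$, a line subbundle $L\subseteq S^m E$ with $\mu(L)>m\mu(E)-g\geq m\mu(E)-g>0$ once $m$ is large. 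Either way, for suitable $m>0$ we obtain a line bundle $L=\Oo_C(M)$ of positive degree together with a nonzero map $L\hookrightarrow S^m E$. Choose any divisor $D$ on $C$ with $0<\deg D<\tfrac{1}{m}\deg L$, so that $\deg(L-mD)>0$; then $H^0(C,L\otimes\Oo_C(-mD))\neq 0$ since a positive-degree line bundle on a curve has a section, and composing gives a nonzero map $\Oo_C\to S^m E\otimes\Oo_C(-mD)$, i.e.\ $S^m E\otimes\Oo_C(-mD)$ is effective.

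It then follows that $E\otimes\Oo_C(-D)$ is pseudo-effective, and since $D$ has positive degree it is ample, hence big, on the curve $C$; Lemma~\ref{peff_plus_big_on_base_is_big} gives that $E$ is big. The equivalence with bigness of $\Oo_X(\xi)$ is just the definition of bigness of a vector bundle. I expect the main obstacle to be the bookkeeping in the semistable sub-case: one must be careful that the $m$ needed to make the \cite{MS85} bound useful is compatible with still being able to twist down by a positive-degree divisor while keeping a section, but this is exactly the same estimate $m\mu(E)-g>0$ used in Proposition~\ref{line_bundle_unstabilization}, so it poses no real difficulty. A cleaner alternative, which avoids splitting into cases entirely, is to note that $\deg E>0$ forces $\mu(S^mE)=m\mu(E)\to\infty$, apply the \cite{MS85} bound uniformly to $S^mE$ (not to a destabilizing sub), and conclude as above; I would likely present it this way.
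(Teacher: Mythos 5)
Your overall strategy---produce a line subbundle $L\subseteq S^mE$ of positive degree and feed it into Lemma~\ref{peff_plus_big_on_base_is_big}---is the same as the paper's, and your uniform use of the Mukai--Sakai bound from \cite{MS85} applied directly to $S^mE$ is a legitimate (and arguably cleaner) substitute for the paper's case split, which instead quotes Hartshorne's theorem \cite{Har71} that a semi-stable bundle of positive degree on a curve is ample and reserves Proposition~\ref{line_bundle_unstabilization} for the unstable case. However, one step of your argument is false as written: you choose $D$ with $0<\deg D<\tfrac{1}{m}\deg L$ and assert that $H^0(C,L\otimes\Oo_C(-mD))\neq 0$ ``since a positive-degree line bundle on a curve has a section.'' That is not true once $g\geq 1$: a general line bundle of degree $d$ with $0<d<g$ has no sections, as Riemann--Roch only guarantees $h^0\geq d-g+1$. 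Since the lemma is used in Theorem~\ref{unstable_case} for curves of arbitrary genus, this is a genuine gap and not mere bookkeeping.

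The repair is immediate and is essentially what the paper does: do not shrink the twist. Take $D=\tfrac{1}{m}c_1(L)$ as a $\QQ$-divisor, so that $mD\sim L$ and $L\otimes\Oo_C(-mD)\cong\Oo_C$; the inclusion $L\hookrightarrow S^mE$ is then itself a global section of $S^mE\otimes L^{-1}$, so $S^mE\otimes\Oo_C(-mD)$ is effective, and $D$ is big because any $\QQ$-divisor of positive degree on a curve is ample. Alternatively, keep your $D$ but ask only for pseudo-effectivity, which is all Lemma~\ref{peff_plus_big_on_base_is_big} requires: $S^mE\otimes L^{-1}$ is effective and $L\otimes\Oo_C(-mD)$ is nef (positive degree on a curve), so the product $S^mE\otimes\Oo_C(-mD)$ is pseudo-effective. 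With either correction your argument goes through.
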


\begin{proof}
Assume that $\deg E>0$.
If $E$ is semi-stable, then $E$ is ample by \cite{Har71} (see also ``Main Claim'' below \cite[Theorem 6.4.15]{Laz04}).
Thus $E$ is big.
Otherwise, if $E$ is unstable, then $S^m E$ has a line subbundle $L\to S^m E$ with $\mu(L)> \mu(S^m E)>0$, so $S^m E\otimes L^{-1}$ is (pseudo-)effective with $\deg L>0$.
Thus $S^m E$ is big by Lemma \ref{peff_plus_big_on_base_is_big}.
That is, $E$ is big.
\end{proof}

The lemma remains true for $\QQ$-twisted vector bundles (see \cite[Section 6.2]{Laz04}).
Let $X=\PP_C(E)$.
Then, for a $\QQ$-divisor $\aa$ on $C$, the $\QQ$-twisted vector bundle $E\langle \aa\rangle$ is big on $C$ if and only if $\Oo_X(\xi+\aa f)$ is big on $X$ if and only if $\deg E\langle \aa\rangle >0$ for $\Oo_X(\aa f)=\pi^*\Oo_C(\aa)$.
Thus we can say that $\Oo_X(m\xi+\bb f)$ is~big on $X$ if~and only if $\deg (S^m E\otimes L)>0$ for $\Oo_X(\bb f)=\pi^*L$ by taking $\bb=m\aa$ for all $m>0$.
\pagebreak 

\begin{Thm}\label{unstable_case}
If $E$ is unstable, then the tangent bundle $T_X$ of $X=\PP_C(E)$ is big.
\end{Thm}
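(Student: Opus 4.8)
The plan is to exhibit, for suitable $m>0$, an effective divisor on $\PP_X(T_X)$ of the form $k\zeta + \Pi^*(\text{big divisor})$, which by Lemma~\ref{peff_plus_big_on_base_is_big} forces $T_X$ to be big. Concretely, I would look for a rank $1$ subsheaf of $S^m T_X$ whose determinant (a line bundle on $X$) has positive ``fiber degree'' relative to $\pi:X\to C$, i.e. restricts to a positive-degree line bundle on the $\PP^{r-1}$-fibers; combined with a pullback from $C$ that can be made as positive as we like on $C$, this should yield a rank $1$ subsheaf of $S^m T_X$ twisted down by a big divisor on $X$. The source of such a subsheaf is the relative tangent bundle together with the unstable structure of $E$.

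First I would use the exact sequence $0\to T_{X/C}\to T_X\to \pi^*T_C\to 0$, which shows $S^m T_{X/C}$ is a subsheaf of $S^m T_X$; so it suffices to make $S^m T_{X/C}$ big after twisting by a big divisor on $X$. Next, from the relative Euler sequence $0\to \Oo_X\to \pi^*E^\vee\otimes\Oo_X(\xi)\to T_{X/C}\to 0$ I get the surjection $S^m\pi^*E^\vee\otimes\Oo_X(m\xi)\twoheadrightarrow S^m T_{X/C}$, equivalently (dualizing the relevant picture on fibers) a description of sections of $S^m T_{X/C}\otimes\Oo_X(\text{twist})$ in terms of $S^m E\otimes S^m E^\vee$-type data on $C$. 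The key input is Proposition~\ref{line_bundle_unstabilization}: since $E$ is unstable, for some $m>0$ there is a line subbundle $L\hookrightarrow S^m E$ with $\mu(L)>\mu(S^m E)$, hence $\deg L > m\deg E/r$. I would feed this into the discussion following Lemma~\ref{bigness_by_degree}: $\Oo_X(m'\xi+\bb f)$ is big on $X$ precisely when $\deg(S^{m'}E\otimes L')>0$ where $\Oo_X(\bb f)=\pi^*L'$. The point is that the unstabilizing line subbundle $L$ of $S^m E$ gives a section of $S^m\pi^*E\otimes \pi^*L^{-1}$, and hence (after passing to $S^m T_{X/C}$, whose sections relate to $S^m E\otimes S^m E^\vee$) I can produce the needed effective divisor on $X$ of the form $m\xi + \Pi$-part minus a big divisor — more precisely I would show $S^{N}T_{X/C}\otimes\Oo_X(-D)$ is effective for some $N$ and some big $\QQ$-divisor $D$ on $X$, using that one can simultaneously arrange large fiber-positivity (from $\Oo_X(\xi)$, using $\deg$ of the line subbundle beating the slope) and arbitrarily large positivity along $C$ (by tensoring with a high power of an ample line bundle on $C$ pulled back, absorbed into the symmetric power).

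Then I would conclude: having $S^{N}T_{X/C}\otimes\Oo_X(-D)$ effective with $D$ big on $X$, and $S^{N}T_{X/C}$ a subsheaf of $S^{N}T_X$, we get $S^{N}T_X\otimes\Oo_X(-D)$ effective, so $H^0(\Oo_{\PP_X(T_X)}(N)\otimes\Pi^*\Oo_X(-D))\neq 0$; since $D$ is big on $X$, Lemma~\ref{peff_plus_big_on_base_is_big} gives that $T_X$ is big. I expect the main obstacle to be the bookkeeping that turns the line subbundle $L\subset S^m E$ into genuine positivity of $S^N T_{X/C}$ twisted down by a big divisor on the total space $X$: one must check that the ``fiber degree'' gained from the tautological class $\xi$ (which is what makes $\Oo_X(\xi)$ fiberwise positive) is not cancelled when passing from $\pi^*E^\vee\otimes\Oo_X(\xi)$ to $T_{X/C}$ and taking symmetric powers, and that the base contribution from $C$ — which is a priori only bounded below by $-g$ via the Mukai–Sakai type bound already used in Proposition~\ref{line_bundle_unstabilization} — can be overcome by taking $m$, and then $N$, large enough. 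Handling the genus-$0$ case $C=\PP^1$ separately (where it also follows from Remark~\ref{trivial_case} together with $T_{\PP^1}$ big, or directly since $S^m E$ always splits) removes the only place where the $-g$ bound could be an issue.
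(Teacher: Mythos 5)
Your overall strategy is the paper's: produce a line subsheaf of $S^mT_{X/C}\subseteq S^mT_X$ of the form $\Oo_X(m\xi+\bb f)$ with $\Oo_X(m\xi+\bb f)$ big, and conclude by Lemma~\ref{peff_plus_big_on_base_is_big}. But the execution has a genuine gap at exactly the point you flag as ``bookkeeping.'' First, the destabilizing line subbundle has to be taken inside $S^mE^\vee$, not $S^mE$: since $E^\vee$ is also unstable, Proposition~\ref{line_bundle_unstabilization} applied to $E^\vee$ gives $L\hookrightarrow S^mE^\vee$ with $\mu(L)>\mu(S^mE^\vee)$, and it is this $L$ that feeds into the Euler surjection $\pi^*S^mE^\vee\otimes\Oo_X(m\xi)\twoheadrightarrow S^mT_{X/C}$ to give the subsheaf $\pi^*L\otimes\Oo_X(m\xi)\to S^mT_{X/C}$; the required bigness of $m\xi+\bb f$ (where $\Oo_X(\bb f)=\pi^*L$) is then precisely $\deg(S^mE\otimes L)=\rk(S^mE\otimes L)\cdot(\mu(L)-\mu(S^mE^\vee))>0$. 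With your choice $L\hookrightarrow S^mE$, what you actually get is a section of $\Oo_X(m\xi)\otimes\pi^*L^{-1}$, i.e.\ an effective class $m\xi-\bb f$ which is \emph{not} big, since $\deg(S^mE\otimes L^{-1})<0$ exactly because $\mu(L)>\mu(S^mE)$; and a subbundle of $S^mE$ dualizes to a quotient, not a subsheaf, of $\pi^*S^mE^\vee\otimes\Oo_X(m\xi)$, so it does not produce a rank-one subsheaf of $S^mT_{X/C}$ at all.

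Second, two of the steps you leave open need actual arguments, and one of your proposed mechanisms fails. The worry that the fiber positivity ``is cancelled when passing to $T_{X/C}$'' amounts to the nonvanishing of the composite $\pi^*L\otimes\Oo_X(m\xi)\to S^mT_{X/C}$; it is settled by observing that $\Hom(\pi^*L\otimes\Oo_X(m\xi),\,S^{m-1}\pi^*E^\vee\otimes\Oo_X((m-1)\xi))=H^0(\pi^*(S^{m-1}E^\vee\otimes L^{-1})\otimes\Oo_X(-\xi))=0$, so the map cannot factor through the kernel of the Euler surjection. On the other hand, your plan to gain ``arbitrarily large positivity along $C$'' by absorbing a high power of an ample line bundle of $C$ into a further symmetric power $S^N$ cannot work: replacing the subsheaf $\Oo_X(m\xi+\bb f)\subseteq S^mT_{X/C}$ by its $k$-th power $\Oo_X(km\xi+k\bb f)\subseteq S^{km}T_{X/C}$ scales the fiber degree and the base degree by the same factor, so the bigness of the twist is unchanged; there is no freedom to add positivity from $C$ after the fact. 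The only source of base positivity is the inequality $\mu(L)>\mu(S^mE^\vee)$ itself, which is why Proposition~\ref{line_bundle_unstabilization} must be applied to $E^\vee$ with $m$ large enough to beat the genus term coming from the Mukai--Sakai bound; once that is done the argument is uniform in $g$ and no separate treatment of $C=\PP^1$ is needed.
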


\begin{proof}
As $E^\vee$ is also unstable, there exists an integer $m>0$ such that $S^m E^\vee$ has a line subbundle $L\to S^m E^\vee$ with $\mu(L)>\mu(S^m E^\vee)$ by Proposition \ref{line_bundle_unstabilization}.
By twisting $\Oo_X(m\xi)$ after pulling-back via $\pi$, it gives a nonzero subbundle
\begin{equation}\label{the_subbundle}
\pi^*L\otimes\Oo_X(m\xi)\to \pi^*S^mE^\vee\otimes\Oo_X(m\xi).
\end{equation}
Note that there cannot exist a nonzero morphism $\pi^*L\otimes \Oo_X(m\xi)\to S^{m-1}\pi^*E^\vee\otimes\Oo_X((m-1)\xi)$ as $\pi^*(S^{m-1}E^\vee\otimes L^{-1})\otimes\Oo_X(-\xi)$ never has a global section.
Thus \eqref{the_subbundle} induces a nonzero subsheaf
\begin{equation}\label{the_subsheaf}
\pi^*L\otimes \Oo_X(m\xi)\to S^m T_{X/C}
\end{equation}
via \eqref{symmetric_product_of_relative_Euler_sequence} as follows.
\[\xymatrix@C1pc@R1.7pc{
&&\pi^*L\otimes \Oo_X(m\xi) \ar[d] \ar@{-->}[dr]&&\\
0 \ar[r] &
S^{m-1}\pi^*E^\vee\otimes \Oo_X((m-1)\xi) \ar[r] &
S^m\pi^*E^\vee\otimes \Oo_X(m\xi) \ar[r] &
S^mT_{X/C} \ar[r] &
0
}\]
Because $S^mT_{X/C}$ is a subbundle of $S^m T_X$, \eqref{the_subsheaf} induces a nonzero subsheaf
\[
\pi^*L\otimes \Oo_X(m\xi)\to S^m T_X,
\]
and hence $S^mT_X\otimes \Oo_X(-m\xi-\bb f)$ becomes effective for $\Oo_X(\bb f)=\pi^*L$.
Due to Lemma \ref{bigness_by_degree} and the~argument after the lemma, $\Oo_X(m\xi+\bb f)$ is big on $X$ since $S^m E\otimes L$ has positive degree;
\[
\deg(S^m E\otimes L)
=\rk(S^m E\otimes L)\cdot\mu(S^m E\otimes L)
=\rk(S^m E\otimes L)\cdot(\mu(L)-\mu(S^mE^\vee))
>0.
\]
Thus, by applying Lemma \ref{peff_plus_big_on_base_is_big}, we can conclude that $T_X$ is big.
\end{proof}

\medskip

\end{document}